\newcommand{\COLORON}{1}
\newcommand{\NOTESON}{1}
\newcommand{\Debug}{0}
\newcommand{\comment}[1]{}
\newcommand{\COMMENT}[1]{}
\definecolor{darkgray}{rgb}{0.3,0.3,0.3}
\newcommand{\defi}[1]{{\color{darkgray}\emph{#1}}}
\newtheorem{proposition}{Proposition}[section]
\newtheorem{definition}[proposition]{Definition}
\newtheorem{theorem}[proposition]{Theorem}
\newtheorem{corollary}[proposition]{Corollary}
\newtheorem{lemma}[proposition]{Lemma}
\newtheorem{conjecture}{{Conjecture}}[section]
\newtheorem{problem}[conjecture]{{Problem}}
\newtheorem{examp}[proposition]{Example}
\newcommand{\FIG}{0}
\newcommand{\note}[1]{ 

\hspace*{-30pt}
	{\color{blue}  NOTE: \color{Turquoise}{\small  \tt \begin{minipage}[c]{1.1\textwidth}  #1 \end{minipage} \ignorespacesafterend }} 
	
	}
\else \newcommand{\note}[1]{} \fi
\newcommand{\afsubm}[1]{ \ifnum \Debug = 1 {\mymargin{#1}}
\fi} 
\renewcommand{\color}[1]{}
\newcommand{\R}{\ensuremath{\mathbb R}}
\newcommand{\Z}{\ensuremath{\mathbb Z}}
\newcommand{\cp}{\ensuremath{\mathcal P}}
\newcommand{\eps}{\ensuremath{\epsilon}}
\newcommand{\lam}{\ensuremath{\lambda}}
\newcommand{\ccc}{\ensuremath{\mathcal C}}
\newcommand{\sm}{\backslash}
\newcommand{\sydi}{\triangle}
\newcommand{\g}{\ensuremath{G\ }}
\newcommand{\G}{\ensuremath{G}}
\newcommand{\Lr}[1]{Lemma~\ref{#1}}
\newcommand{\Tr}[1]{Theorem~\ref{#1}}
\newcommand{\Sr}[1]{Section~\ref{#1}}
\newcommand{\Prr}[1]{Pro\-position~\ref{#1}}
\newcommand{\Cr}[1]{Corollary~\ref{#1}}
\renewcommand{\iff}{if and only if}
\newcommand{\fe}{for every}
\newcommand{\st}{such that}
\newcommand{\ti}{there is}
\newcommand{\mymargin}[1]{
  \marginpar{%
    \begin{minipage}{\marginparwidth}\small%
      \begin{flushleft}%
        {\color{blue}#1}%
      \end{flushleft}%
   \end{minipage}%
  }%
}%
\newcommand{\mySection}[2]{}
\newcommand{\dir}{\overrightarrow}
\newcommand{\meg}{\geqslant}
\newcommand{\mik}{\leqslant}
\newcommand{\ltest}{$L_1$--testable}
\title{The Bradley--Terry condition is $L_1$--testable}
\author[1]{Agelos Georgakopoulos\thanks{Supported by EPSRC grant EP/L002787/1, and by the European Research Council (ERC) under the European Union’s Horizon 2020 research and innovation programme (grant agreement No 639046).}}
\author[2]{Konstantinos Tyros\thanks{Supported by ERC
grant 306493.}}
\affil[1]{{Mathematics Institute}\\
	{University of Warwick}\\
	{CV4 7AL, UK}\\}
\affil[2]{Department of Mathematics, Ko\c{c} University, Sarıyer, Istanbul, 34450, Turkey}
\date{}
\begin{document}
\maketitle

\begin{abstract}
We provide an algorithm with constant running time that given a weighted tournament $T$, distinguishes with high probability of success between the cases that $T$ can be represented by a Bradley--Terry model, or cannot even be approximated by one. The same algorithm tests whether the corresponding Markov chain is reversible.
\end{abstract}

{\bf Keywords:} Bradley--Terry model, property testing, $L_1$-tester, reversible Markov chain, stochastic tournament

\section{Introduction}

Suppose we have a set $S$ of individuals participating in a tournament, and for each pair $x,y$ in $S$ we have assigned a probability $p_{xy}$ that $x$ beats $y$ (so that $p_{yx} = 1- p_{xy}$). The Bradley--Terry model \cite{BraTer} seeks to assign real numbers $a(x)>0$ to these individuals so that
\begin{equation}
\label{pa}
\frac{p_{xy}}{p_{yx}} = \frac{a(x)}{a(y)}.
\end{equation}
holds for every $x,y$ in $S$; equivalently, we have $p_{xy} = \frac{a(x)}{a(x)+a(y)}$.

This statistical model, which was already considered by Zermelo \cite{ZerBer}, is used in many practical applications where individuals can be compared in pairs, and the probabilities $p_{xy}$ are often measured empirically\footnote{{\small en.wikipedia.org/wiki/Bradley-Terry\_model}}.

Although it is convenient to represent the ${|S|\choose 2}$ parameters $p_{xy}$ with the just $|S|$ parameters $a(x)$, it is easy to see that not all tournaments can be approximated well in this way: consider for example a tournament containing three vertices $x_0,x_1,x_2$ such that $x_i$ beats $x_{i+1\pmod 3}$ with probability 90\%. The model has a widespread use nevertheless, and the algorithm used for computing the $a(x)$ is known to converge even for such tournaments \cite{DavMeth}.

Thus it is important to know when a Bradley--Terry model approximates the probabilities $p_{xy}$ well-enough. An obvious but rather inefficient approach is to first compute the $a(x)$, and then test them against \eqref{pa} for every pair $x,y$.

\medskip
The aim of this paper is to provide a fast randomized algorithm that given a tournament $T$, distinguishes with high probability of success between the cases that $T$ can be represented by a Bradley--Terry model, or cannot even be approximated by one. The running time of our algorithm is independent of the size of $T$; it only depends on the approximation tolerance and the desired probability of success. It is assumed here that ---unlike in statistical setups--- our algorith can query the exact value $p_{xy}$ in one step.

More consisely, we prove that the Bradley--Terry condition is \ltest\ in the sense of \cite{BRY}. 

\section{Preliminaries}

\subsection{Property testing}
The notion of property testing  was established by Goldreich, Goldwasser and Ron \cite{GoGoRoPro} for graph properties, following a similar notion of Rubinfeld  and Sudan \cite{RuSuRob} for program     testing in computer science. Since then the concept has received a lot of attention in various contexts; see \cite{Goldreich}. We recall the following standard definitions from \cite{AlFoEas}.

A \defi{property} $\cp$ is a family of (undirected)
graphs closed under isomorphism. A graph \G\ with n vertices is \defi{$\epsilon$-far} from satisfying $\cp$ if one must
add or delete at least $\epsilon n^2$ edges in order to turn \G\ into a graph in $\cp$.

An \defi{$\epsilon$-tester} for $\cp$ is a randomized algorithm, which given the ability to check whether there is
an edge between a given pair of vertices, distinguishes with probability at least 2/3 between the cases
$G$ satisfies $\cp$ and $G$ is $\epsilon$-far from satisfying $\cp$. Such an $\epsilon$-tester is \defi{one-sided} if, whenever $G$ satisfies
$\cp$, the $\epsilon$-tester determines this with probability 1. A property $\cp$ is \defi{strongly-testable} if for every fixed $\epsilon> 0$ there exists a one-sided $\epsilon$-tester for $\cp$ whose query complexity is bounded only by a function of
$\epsilon$, which is independent of the size of the input graph.
Call a property $\cp$ \defi{easily testable} if it is strongly testable with a one-sided $\epsilon$-tester whose query complexity is polynomial in $\epsilon^{-1}$, and otherwise call $\cp$ \defi{hard}.

It is easy to adapt the above definitions to the class of edge-weighted graphs: say that a weighted graph $(G,w)$ with n vertices is \defi{$\epsilon$-far} from satisfying $\cp$ if the total change in $w$ required
 in order to turn $(G,w)$ into a weighted graph in $\cp$ is  at least $\epsilon n^2$. All other definitions can be applied verbatim and, following \cite{BRY}, we say that a property is (easily/strongly) \ltest\ if it complies with the above definitions when we use this new notion of $\epsilon$-far.

\subsection{Stochastic tournaments} \label{secStTo}

A \defi{stochastic tournament} is a pair $(K,w)$ where $K$ is a \defi{tournament}, i.e.\ a complete graph with $n$ vertices in which every edge is given a direction, and $w: E(K) \to [0,1]$ is an assignment of weights to the (directed) edges of $K$. Intuitively, we think of $w(xy)$ as the probability that $x$ beats $y$ in a game between them. The \defi{matrix} $P= P(K,w) = (p_{xy})$ of the stochastic tournament  $(K,w)$ is the square matrix with rows and columns indexed by the vertices of $K$ \st\ $p_{xy} = w(xy) = 1-p_{yx}$ whenever the edge $xy$, directed from $x$ to $y$, belongs to $E(K)$. A stochastic tournament $(K,w)$  satisfies the \defi{Bradley--Terry condition}, if there exists a family of positive real numbers $(a_x)$ indexed by the vertex set of $K$ such that
condition \eqref{pa} holds.
\comment{
	for every pair of distinct nodes $x$ and $y$ we have that
\begin{equation}
  \label{pa}
  \frac{p_{xy}}{p_{yx}} = \frac{a(x)}{a(y)}.
	\end{equation}
}
Moreover, a stochastic tournament corresponds to a Markov chain by considering the matrix $Q=Q(K,w)=(q_{xy})_{x,y\in V(K)}$ defined by
\begin{equation}
q_{xy}=\left\{ \begin{array} {l} p_{xy}/n,\;\;\;\;\;\;\;\;\;\;\;\;\;\;\;\;\text{ if } x\neq y\\
                               1 - \frac1{n}\sum_{z\neq x} p_{xz}, \;\text{ if } x= y     \end{array}  \right.
\end{equation}
Then $Q$ defines indeed a Markov chain by interpreting $q_{xy}$ as the transition probability from vertex $x$ to vertex $y$.
Let us recall that such a Markov chain with transition matrix $(q_{xy})_{x,y\in V(K)}$ is called reversible, if there exists a probability distribution $(\pi_x)$ on the vertex set of $K$ such that for every pair of nodes $x,y$ we have
\begin{equation}
  \label{revdef}
  \pi_xq_{xy}=\pi_yq_{yx}.
\end{equation}
All these notions are related to each other as it is described by the following partly folklore proposition.
\begin{proposition}
    \label{prop1}
	Let $(K,w)$ be a stochastic tournament. Then the following conditions are equivalent
	\begin{enumerate}
		\item \label{BT} $(K,w)$ corresponds
		to a Bradley--Terry model;
		\item \label{rev} The Markov chain corresponding to $Q$ is reversible;
		\item \label{tri} Every triangle of $(K,w)$ is balanced, i.e.\ for every triple $x,y,z$ in $V(K)$, we have $p_{xy} p_{yz} p_{zx} = p_{xz} p_{zy} p_{yx}$;
		\item \label{trir} There is a vertex $r$ in $V(K)$ such that every triangle containing $r$ is balanced.
	\end{enumerate}
	
\end{proposition}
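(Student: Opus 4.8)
The plan is to establish the four equivalences through the cycle $\ref{BT}\Rightarrow\ref{tri}\Rightarrow\ref{trir}\Rightarrow\ref{BT}$, supplemented by the separate equivalence $\ref{BT}\Leftrightarrow\ref{rev}$. Throughout I would assume every $p_{xy}$ lies in the open interval $(0,1)$; otherwise the ratios in \eqref{pa} and \eqref{revdef} are undefined and no Bradley--Terry representation can exist, so this is the only case of interest.

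For $\ref{BT}\Rightarrow\ref{tri}$, substitute $p_{xy}/p_{yx}=a_x/a_y$ into the triangle and compute the cyclic product
\[
\frac{p_{xy}\,p_{yz}\,p_{zx}}{p_{xz}\,p_{zy}\,p_{yx}}=\frac{a_x}{a_y}\cdot\frac{a_y}{a_z}\cdot\frac{a_z}{a_x}=1,
\]
which is precisely the balance condition. The implication $\ref{tri}\Rightarrow\ref{trir}$ is immediate, since \ref{trir} merely restricts \ref{tri} to the triangles through one fixed vertex.

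The equivalence $\ref{BT}\Leftrightarrow\ref{rev}$ is purely algebraic. Because $q_{xy}=p_{xy}/n$ for $x\neq y$, the detailed balance equation \eqref{revdef} simplifies to $\pi_x p_{xy}=\pi_y p_{yx}$, i.e.\ $\pi_x/\pi_y=p_{yx}/p_{xy}$, whereas \eqref{pa} gives $p_{yx}/p_{xy}=a_y/a_x$. Taking $\pi_x$ proportional to $1/a_x$ (normalised to sum to $1$) turns a Bradley--Terry representation into a reversing distribution, and taking $a_x$ proportional to $1/\pi_x$ does the converse; the hypothesis $p_{xy}\in(0,1)$ guarantees that the reversing distribution is strictly positive, so $1/\pi_x$ makes sense.

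The substantive step is $\ref{trir}\Rightarrow\ref{BT}$, where I expect the main difficulty to lie. Fix the reference vertex $r$ provided by \ref{trir}, set $a_r:=1$, and define $a_x:=p_{xr}/p_{rx}$ for each $x\neq r$, so that \eqref{pa} holds by construction for every pair involving $r$. It then remains to verify \eqref{pa} for an arbitrary pair $x,y$ with $x,y\neq r$, and for this I would invoke the balance of the triangle $\{r,x,y\}$, which rearranges to
\[
\frac{p_{xy}}{p_{yx}}=\frac{p_{xr}}{p_{rx}}\cdot\frac{p_{ry}}{p_{yr}}=\frac{a_x}{a_y}.
\]
Thus the single family of balanced triangles through $r$ supplies exactly the consistency needed to glue the local ratios into one global potential $a$ --- morally the same mechanism by which a conservative potential is recovered from a curl-free field --- and this closes the cycle.
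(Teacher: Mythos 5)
Your proposal is correct and follows essentially the same route as the paper: the same cycle of implications \ref{BT}$\Rightarrow$\ref{tri}$\Rightarrow$\ref{trir}$\Rightarrow$\ref{BT}, the same choice $a(r)=1$, $a(x)=p_{xr}/p_{rx}$ for the key step, and the same correspondence $\pi_x\propto 1/a(x)$ for \ref{BT}$\Leftrightarrow$\ref{rev}. Your explicit remarks on normalising $\pi$ and on requiring $p_{xy}\in(0,1)$ are sensible minor additions that the paper leaves implicit.
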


The equivalence of \ref{BT} and \ref{rev} was noticed by Rajkumar \& Agarwal \cite[Section 4]{RajAgr}, while the equivalence of \ref{rev} and \ref{tri} is essentially Kolmogorov's criterion.

\begin{proof}
	\ref{BT} $=$ \ref{rev}: Let the stationary distribution $\pi(x)$ of a vertex equal the inverse $1/a(x)$ of its Bradley--Terry score. Then equivalence follows easily from the definition of reversibility: we have $\pi(x) Q(xy) = \pi(y) Q(yx)$ \iff\ $P(xy)/P(yx) = a(x)/a(y)$.
	
	\ref{trir} $\to$ \ref{BT}: Given such a vertex $r$, let $a(r)= 1$. For every other vertex $y$, let $a(y):= p_{yr}/p_{ry}$. We claim that these numbers satisfy the Bradley--Terry condition, i.e.\ $p_{zy}/p_{yz} = a(z)/a(y) $ \fe\ $y,z$ in $V(K)$.	
	Indeed, note that $\frac{p_{ry}}{p_{yr}}   \frac{p_{zr}}{p_{rz}}= \frac1{a(y)} \frac{a(z)}{1}$ by the definition of $a$. Combining this with \ref{trir} applied to the triangle $r,y,z$ results in cancellations yielding the desired $p_{zy}/p_{yz} = a(z)/a(y) $.

	\ref{BT} $\to$ \ref{tri}: We have $\frac{p_{xy} p_{yz} p_{zx} }{ p_{xz} p_{zy} p_{yx}} = \frac{a(x)a(y)a(z)}{a(x)a(z)a(y)}=1$ by applying  \eqref{pa} three times. Finally, the implication \ref{tri} $\to$ \ref{trir} is trivial completing the proof.
\end{proof}

\begin{theorem}
\label{test}
Each of the four conditions of \Prr{prop1} is easily \ltest.
\end{theorem}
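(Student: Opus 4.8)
Since \Prr{prop1} shows that the four conditions are equivalent, they describe a single property of stochastic tournaments, and it suffices to exhibit one easily $L_1$-tester for it. I would base the tester on the triangle condition \ref{tri}, which is purely local and hence amenable to sampling. The tester samples $m = O(\epsilon^{-1})$ triples of distinct vertices of $K$ uniformly at random, queries the three weights of each triple (we are allowed exact queries), and checks the balance equation $p_{xy}p_{yz}p_{zx} = p_{xz}p_{zy}p_{yx}$ on each; it accepts if and only if every sampled triangle is balanced. One-sidedness is immediate: if $(K,w)$ satisfies the Bradley--Terry condition then by \Prr{prop1} every triangle is balanced, so the tester accepts with probability $1$. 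The query complexity is $3m = O(\epsilon^{-1})$, independent of $n$ and polynomial in $\epsilon^{-1}$, as required for easy testability.

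The heart of the argument is soundness: if $(K,w)$ is $\epsilon$-far from the property, then a uniformly random triangle must be unbalanced with probability $\Omega(\epsilon)$. To prove this I would reuse the explicit construction from the proof of \ref{trir}$\to$\ref{BT}. For each reference vertex $r$ put $a_r(r)=1$ and $a_r(y)=p_{yr}/p_{ry}$ for $y\neq r$, and let $\hat w_r$ be the Bradley--Terry weighting with these scores, i.e.\ $\hat w_r(xy)=a_r(x)/(a_r(x)+a_r(y))$. The key identity, which follows from the same cancellation used in \Prr{prop1}, is that $\hat w_r(xy)=w(xy)$ exactly when the triangle $\{x,y,r\}$ is balanced; moreover the edges incident to $r$ are always reproduced correctly, so $\hat w_r$ differs from $w$ only on edges whose triangle with $r$ is unbalanced. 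Since each $\hat w_r$ is a Bradley--Terry weighting and $(K,w)$ is $\epsilon$-far, we have $\|w-\hat w_r\|_1\geq \epsilon n^2$ for every $r$. Averaging over a uniformly random $r$ and regrouping the sum by triangles yields $\sum_{\text{triangles}} J(\{x,y,z\})\geq \epsilon n^3$, where $J(\{x,y,z\})=|w(xy)-\hat w_z(xy)|+|w(yz)-\hat w_x(yz)|+|w(zx)-\hat w_y(zx)|$ is a nonnegative quantity that vanishes precisely on balanced triangles. As $J\leq 3$ and there are $\binom n3$ triangles, a uniformly random triangle satisfies $\Ex[J]=\Omega(\epsilon)$, whence $\Pr[J>0]\geq \Ex[J]/3=\Omega(\epsilon)$. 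Thus sampling $O(\epsilon^{-1})$ triangles finds an unbalanced one, and hence rejects, with probability at least $2/3$.

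The main obstacle is that the scores $a_r(y)=p_{yr}/p_{ry}$ degenerate when weights hit the boundary $\{0,1\}$: a value $p_{ry}\in\{0,1\}$ makes $a_r(y)$ equal to $0$ or $\infty$, so $\hat w_r$ need not be a genuine interior Bradley--Terry weighting. The mildly degenerate cases are harmless, because the distance from $w$ to the Bradley--Terry set equals the distance to its closure, and $\hat w_r$ still lies in that closure once one allows extended scores in $[0,\infty]$; hence the inequality $\|w-\hat w_r\|_1\geq \epsilon n^2$ survives. The genuinely problematic configuration is a reference $r$ beaten with certainty by two or more vertices, for then several scores are simultaneously infinite and the formula for $\hat w_r$ on the edges among those vertices is indeterminate. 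I expect this to be where the real work lies, and would handle it by first testing separately for an abundance of such certain outcomes (these are themselves locally detectable and, if numerous, make $\epsilon$-farness witnessable by short configurations), and otherwise restricting the averaging to the majority of reference vertices incident to no boundary edge, for which $\hat w_r$ is an honest interior Bradley--Terry weighting and the clean argument above applies verbatim.
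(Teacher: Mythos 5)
Your proposal is correct and follows essentially the same route as the paper: the same triangle-sampling tester, and the same soundness argument in which for each reference vertex $r$ one repairs only the edges opposite $r$ (your $\hat w_r(xy)=\frac{p_{xr}p_{ry}}{p_{xr}p_{ry}+p_{yr}p_{rx}}$ is exactly the repaired weighting of the paper's Lemma~\ref{lem1}) and then averages over $r$ to convert $\epsilon$-farness into a positive density of unbalanced triangles, which is precisely the content of Corollary~\ref{bac}. The boundary degeneracies you flag in your last paragraph are silently assumed away in the paper as well (the Bradley--Terry condition with positive scores already forces all weights into $(0,1)$), so no additional work is required there beyond what the paper itself omits.
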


As observerd in \cite[Fact 1.1.]{BRY}, $L_1$--testability implies $L_p$ testability for all $p>1$.
The proof of Theorem \ref{test} is based on the balanced triangle condition of \Prr{prop1}~\ref{tri}, and parallels a classical proof of testability of triangle-freeness for dense graphs. For this we use a continuous analogue of the triangle removal  lemma, which we prove in the next section (\Cr{bac}).

In \Sr{secEps} we show that the notion of being $\epsilon$-far from satisfying these conditions corresponds to natural approximate versions of the Bradley--Terry condition and reversibility.


\section{Proof of \Tr{test}}

Let us observe that given any stochastic tournament $(K,w)$ and any triangle $T$ with nodes $x,y$ and $z$, we can make $T$ balanced by changing the value $w$ on any of its edges. More precisely, $T$ can be made balanced by changing, for instance, the value of $w$ only on the edge $xy$ yielding $p_{xy}$ to be equal to $\frac{p_{zy}p_{xz}}{p_{zy}p_{xz}+p_{yz}p_{zx}}$.

\begin{definition}
  Let $(K,w)$ be a stochastic tournament and $T$ a triangle with nodes $x,y$ and $z$. We define the discrepancy of $T$, denoted by $\mathrm{disc}_{(K,w)}(T)$ to be the maximum of the changes of $w$ required on a single edge of $T$ to turn it into a balanced triangle, that is,
  \begin{equation}
    \mathrm{disc}_{(K,w)}(T)=    \max \big\{|\alpha|,|\beta|,|\gamma|\big\}
  \end{equation}
  where $\alpha=p_{xy}-\frac{p_{zy}p_{xz}}{p_{zy}p_{xz}+p_{yz}p_{zx}}$,
  $\beta=p_{yz}-\frac{p_{yx}p_{xz}}{p_{yx}p_{xz}+p_{xy}p_{zx}}$ and $\gamma=p_{zx}-\frac{p_{yx}p_{zy}}{p_{yx}p_{zy}+p_{xy}p_{yz}}$.
\end{definition}

We will omit the subscript from the symbol $\mathrm{disc}_{(K,w)}$ whenever the stochastic tournament $(K,w)$ is clear from the context.
Let us observe that $T$ is balanced if and only if $\mathrm{disc}(T)=0$.

The following lemma is based on the observation that we can make any stochastic tournament reversible by modifying the edges that do not belong to any fixed spanning tree; see \Lr{ST}. Here we work with the spanning tree consisting of all edges incident with a fixed vertex $r$.
\begin{lemma}
  \label{lem1}
  Let $(K,w)$ be a stochastic tournament and $r$ a node in $K$. Also let $\mathcal{C}$ be the collection of all triangles containing $r$ and for each $T$ in $\mathcal{C}$ let $e_T$ be the edge of $T$ that does not contain $r$. Then \ti\  a weighting $w'$ satisfying the following:
  \begin{enumerate}
    \item \label{it1} the stochastic tournament $(K,w')$ is reversible,
    \item \label{it2} $w'(e)=w(e)$ for all $e$ in $E(K)$ that do not belong to $\{e_T:T\in\mathcal{C}\}$ and
    \item \label{it3} $|w'(e_T)-w(e_T)|\mik\mathrm{disc}_{(K,w)}(T)$ for all $T$ in $\mathcal{C}$.
  \end{enumerate}
\end{lemma}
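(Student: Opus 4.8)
The plan is to construct $w'$ explicitly by balancing each triangle of $\mathcal{C}$ one edge at a time, exploiting the fact that these local adjustments do not interfere with one another. The first observation is that $\{e_T:T\in\mathcal{C}\}$ is precisely the set of edges of $K$ \emph{not} incident with $r$: every edge $yz$ with $y,z\neq r$ closes up with $r$ into a unique triangle $T=\{r,y,z\}\in\mathcal{C}$, for which $e_T=yz$, and conversely each $T\in\mathcal{C}$ contributes exactly this one non-$r$-edge. Thus the edges that condition \ref{it2} forbids us to change are exactly those of the star spanning tree centred at $r$ alluded to in \Lr{ST}, while the editable edges $e_T$ are the non-tree edges, each lying in exactly one triangle of $\mathcal{C}$.

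I would then define $w'$ as follows. On every edge incident with $r$ set $w'(e)=w(e)$; and for each non-$r$-edge $e_T=yz$ set $w'(yz)$ to be the unique value that makes $T=\{r,y,z\}$ balanced when only the weight of $yz$ is altered, that is, the value given by the formula in the observation preceding the definition of discrepancy. Condition \ref{it2} then holds by construction. For condition \ref{it3}, note that $|w'(e_T)-w(e_T)|$ is by definition the change of $w$ on the single edge $e_T$ needed to balance $T$, and hence equals one of the three numbers $|\alpha|,|\beta|,|\gamma|$ occurring in $\mathrm{disc}_{(K,w)}(T)$ (the one corresponding to $e_T$). Since the discrepancy is the maximum of these three, we get $|w'(e_T)-w(e_T)|\mik\mathrm{disc}_{(K,w)}(T)$, as required.

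It remains to establish condition \ref{it1}. By the equivalence of \ref{rev} and \ref{trir} in \Prr{prop1}, reversibility of $(K,w')$ is equivalent to every triangle containing $r$ being balanced under $w'$, so it suffices to check that each $T=\{r,y,z\}\in\mathcal{C}$ is balanced with respect to $w'$. The balance of $T$ depends only on the weights $w'(ry),w'(rz),w'(yz)$; the first two equal $w(ry),w(rz)$, while $w'(yz)$ was chosen precisely to balance $T$ against these fixed values. Hence $T$ is balanced, and as this holds for every $T\in\mathcal{C}$, \Prr{prop1} yields that $(K,w')$ is reversible.

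The one genuine point to be careful about—and the step I expect to be the crux—is the \emph{independence} of the balancing operations: I must ensure that editing $w'(yz)$ to balance one triangle cannot spoil the balance of another. This is guaranteed because distinct triangles of $\mathcal{C}$ meet only in the vertex $r$ and possibly in a shared $r$-edge, but never in a non-$r$-edge, so each $e_T$ is assigned exactly once and the three weights governing the balance of $T$ (the two $r$-edges and $e_T$) are never changed thereafter. A minor technical check is that the prescribed balancing value lies in $[0,1]$—which holds since it is a ratio of a nonnegative quantity to a sum containing it—and is well-defined, for which one invokes positivity of the relevant probabilities.
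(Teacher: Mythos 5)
Your proof is correct and follows essentially the same route as the paper's: rebalance each triangle of $\mathcal{C}$ by adjusting only its edge $e_T$ opposite $r$, note that each such edge lies in exactly one triangle of $\mathcal{C}$ so the adjustments do not interfere, and conclude reversibility from the equivalence of conditions \ref{rev} and \ref{trir} in \Prr{prop1}. The paper's version is just a terser statement of the same argument; your additional checks (that the balancing value lies in $[0,1]$ and that the change on $e_T$ is bounded by the maximum defining the discrepancy) are correct and make the write-up more complete.
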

\begin{proof}
  By the definition of $\mathrm{disc}_{(K,w)}(T)$, for each $T$ in $\mathcal{C}$, we can assign to $e_T$ a new weight $w'(e_T)$ making $T$ balanced and satisfying $|w(e_T)-w'(e_T)|<\mathrm{disc}_{(K,w)}(T)$. Since for every $T$ in $\mathcal{C}$ we know that $T$ is the only triangle in $\mathcal{C}$ containing $e_T$, we deduce that every triangle in $\mathcal{C}$ is balanced. The result follows by Proposition \ref{prop1}.
\end{proof}


Averaging this over all vertices $r$ of $K$, we deduce

\begin{corollary} \label{bac}
	Let $(K,w)$ be a stochastic tournament satisfying
\[\sum_{T}\mathrm{disc}(T)\mik\eps{|K|\choose 3},\]
for some $\eps>0$,
where the sum is taken over all triangles. Then \ti\ a weighing $w'$ such that the tournament $(K,w')$ is reversible, and
  \begin{equation}
  	\label{ww}
  	\sum_{e\in E(K)}|w(e)-w'(e)|< \epsilon {|K|\choose 2}.
  \end{equation}
\end{corollary}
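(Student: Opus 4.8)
The plan is to apply \Lr{lem1} once for each possible root $r\in V(K)$ and then average the resulting modification costs over all $r$, a standard first-moment argument. Fix $r$ and let $w'_r$ be the reversible weighting supplied by \Lr{lem1}. By conditions \ref{it2} and \ref{it3} of that lemma, $w'_r$ differs from $w$ only on the edges $e_T$ with $T\ni r$, and on each such edge the change is at most $\mathrm{disc}(T)$. Since the edge $e_T$ opposite to $r$ determines $T$ uniquely --- $T$ being the triangle spanned by $e_T$ together with $r$ --- the assignment $T\mapsto e_T$ is a bijection between the triangles through $r$ and the edges of $K$ not incident with $r$, so no edge is double-counted. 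Hence
\[
\sum_{e\in E(K)}|w(e)-w'_r(e)|\;\mik\;\sum_{T\ni r}\mathrm{disc}(T).
\]

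Next I would average this inequality over the $|K|$ choices of root. Summing the right-hand side over all $r$ counts each triangle $T$ exactly three times, once for each of its vertices, so
\[
\frac{1}{|K|}\sum_{r\in V(K)}\sum_{T\ni r}\mathrm{disc}(T)=\frac{3}{|K|}\sum_{T}\mathrm{disc}(T).
\]
Plugging in the hypothesis $\sum_T\mathrm{disc}(T)\mik\eps\binom{|K|}{3}$ and simplifying the binomial coefficient yields
\[
\frac{3}{|K|}\sum_T\mathrm{disc}(T)\;\mik\;\frac{3\eps}{|K|}\binom{|K|}{3}=\frac{\eps(|K|-1)(|K|-2)}{2}\;<\;\eps\binom{|K|}{2},
\]
where the final strict inequality holds because $(|K|-2)<|K|$ (and $\eps(|K|-1)>0$).

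To finish, observe that the average over $r$ of the quantities $\sum_{e\in E(K)}|w(e)-w'_r(e)|$ is strictly smaller than $\eps\binom{|K|}{2}$, so at least one root $r$ must realize a value not exceeding this average; setting $w':=w'_r$ for such an $r$ produces a reversible weighting satisfying \eqref{ww}, as required.

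I do not anticipate a genuine obstacle: the whole argument is a clean averaging computation layered on top of \Lr{lem1}. The only points needing care are the two counting claims --- that for a fixed $r$ the edges $e_T$ are pairwise distinct, and that summing over all $r$ weights each triangle exactly three times --- together with checking that the binomial arithmetic delivers the \emph{strict} bound against $\eps\binom{|K|}{2}$ rather than a weaker constant.
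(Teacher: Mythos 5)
Your proposal is correct and follows essentially the same route as the paper: apply Lemma~\ref{lem1} for each choice of root $r$, note that summing $\sum_{T\ni r}\mathrm{disc}(T)$ over all $r$ counts each triangle three times, and use $3\binom{n}{3}<n\binom{n}{2}$ to find a root whose modification cost is strictly below $\eps\binom{|K|}{2}$. Your write-up is in fact slightly more careful than the paper's (which writes a strict inequality where the hypothesis only gives $\mik$), but the argument is the same.
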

\begin{proof}
  Let $\mathcal{T}$ be the collection of all triangles in $K$ and for every node $r$ in $K$, let $\mathcal{C}_r$ be the collection of all triangles containing $r$. Let $n=|K|$. Observe that
  \begin{equation}
    \label{eq009}
    \sum_{r\in V(K)}\sum_{T\in\mathcal{C}_r}\mathrm{disc}(T)=3\sum_{T\in\mathcal{T}}\mathrm{disc}(T)<3\epsilon {n\choose 3} < n\eps{n\choose2}
  \end{equation}
  and therefore there is some $r$ in $V(K)$ such that $\sum_{T\in\mathcal{C}_r}\mathrm{disc}(T)<\eps{n\choose2}$. The result follows from Lemma \ref{lem1}.
\end{proof}

We now use the last result to prove our main theorem.

\begin{proof}
  [Proof of Theorem \ref{test}]
  We consider the following randomised algorithm. Sample $f(\eps)=\lceil-\log_{1-\epsilon}3\rceil$ triangles of $K$ independently and uniformly at random, and check whether they are balanced. Answer `yes' if all these triangles are balanced, and answer `no' otherwise.

  Clearly, if $(K,w)$ satisfies the conditions of Proposition \ref{prop1}, our algorithm
  responds `yes' with probability one by item~\ref{tri}. On the other hand, assuming that  $(K,w)$ is $\epsilon$-far from satisfying these conditions, that is, \eqref{ww} fails for every $w'$ such that $(K,w')$ satisfies the Bradley--Terry condition,
   Corollary \ref{bac} implies
  \[\sum_{T}\mathrm{disc}(T)>\eps{|K|\choose 3}.\]
  Since $0\mik \mathrm{disc}(T)\mik 1$, letting $B$ be the set of the unbalanced triangles, we obtain $|B|\meg\eps{|K|\choose 3}$. By the choice of $f(\eps)$, our algorithm responds `no' with probability at least 2/3; indeed, each of our sampled triangles is in $B$ with probability at least $\epsilon$. Thus the probability that none of them is in $B$ is at most $ (1-\epsilon)^{f(\eps)} \mik 1/3$.
  It is not hard to check that $f(\eps)< c\eps^{-1}$ for some constant $c$ (in fact we can take $c=2$), and so our property is easily \ltest.
\end{proof}

{\bf Remark 1:} The above proof parallels the classical proof of testability of triangle-freeness for dense graphs, with \Cr{bac} playing the role of the triangle removal lemma. But as the same $\eps$ appears in the condition and the conclusion of \Cr{bac}, the conditions of Proposition \ref{prop1} are easily testable contrary to triangle-freeness which is known to be hard \cite[p.~4]{ConFoxGra}.

{\bf Remark 2:} Our proof in fact shows the slightly stronger fact that the conditions of \Prr{prop1} are easily $L_0$--testable in the sense of \cite[Fact 1.1.]{BRY}, but we chose to work with $L_1$ as it is more natural in our setup. Indeed, we could have defined $\mathrm{disc}(T)$ to be 1 if $T$ is unbalanced and 0 otherwise, and the rest of our proof could be used verbatim.

\section{Approximate versions of \Prr{prop1}} \label{secEps}
Let $(K,w)$ be a stochastic tournament and  $\eps$ a real with $0<\eps\mik1$. We will say that $(K,w)$ is an $\eps$-approximate Bradley--Terry model if there is a map $a:V(K)\to \R_{>0}$ such that
\[(1+\eps)^{-1}\frac{a(x)}{a(x)+a(y)}\mik p_{xy}\mik (1+\eps)\frac{a(x)}{a(x)+a(y)}\]
for all pairs of distinct vertices $x,y$ in $V(K)$.
We will call the map $a$ an $\eps$-approximate Bradley--Terry score. Moreover, a triangle with nodes $x,y$ and $z$  will be called $\eps$-balanced if
\[(1+\eps)^{-1}\mik \frac{p_{xy}p_{yz}p_{zx}}{p_{yx}p_{zy}p_{xz}}\mik1+\eps.\]

\begin{proposition}
  \label{aprprop1}
  Let $(K,w)$ be a stochastic tournament and  $\eps$ a real with $0<\eps\mik1$. Also let $r$ be a node in $K$ and assume that every triangle containing $r$ is $\eps$-balanced. Then $(K,w)$ is an $\eps$-approximate Brandley--Terry model.
\end{proposition}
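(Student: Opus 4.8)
The plan is to mimic the proof of the implication \ref{trir}$\to$\ref{BT} in \Prr{prop1}, keeping track of how the exact equalities there degrade into two-sided $\eps$-estimates. I would use the very same score as in the exact case: set $a(r):=1$ and $a(y):=p_{yr}/p_{ry}$ for every node $y\neq r$, and then verify the two defining inequalities of an $\eps$-approximate Bradley--Terry model for this $a$. The first observation is that these inequalities are most conveniently handled in ``ratio form''. Writing $s:=p_{xy}/p_{yx}$ and $t:=a(x)/a(y)$, one has $p_{xy}=s/(s+1)$ and $\frac{a(x)}{a(x)+a(y)}=t/(t+1)$, so both the target quantity $p_{xy}$ and the reference quantity $\frac{a(x)}{a(x)+a(y)}$ are the images of $s$ and $t$ under the increasing map $f(u)=u/(u+1)$.

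I then reduce the required estimate to a single bound on the ratio $s/t=\frac{p_{xy}/p_{yx}}{a(x)/a(y)}$. Substituting the definition of $a$, for $x,y\neq r$ this ratio equals
\[
\frac{p_{xy}}{p_{yx}}\cdot\frac{p_{rx}\,p_{yr}}{p_{xr}\,p_{ry}}=\frac{p_{xy}\,p_{yr}\,p_{rx}}{p_{yx}\,p_{ry}\,p_{xr}},
\]
which is exactly the balance ratio of the triangle $\{x,y,r\}$; when one of $x,y$ equals $r$ the ratio collapses to $1$. Since every triangle through $r$ is $\eps$-balanced by hypothesis (and $\eps$-balance is insensitive to the orientation used, because reversing the cyclic order merely inverts the ratio while the condition $(1+\eps)^{-1}\mik\,\cdot\,\mik1+\eps$ is symmetric under inversion), I obtain $(1+\eps)^{-1}\mik s/t\mik 1+\eps$ for every pair $x,y$.

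The one genuinely non-formal step, and the part I expect to be the main obstacle, is passing from this ratio bound back to the probability-form inequalities in the definition of an $\eps$-approximate model, since $f(u)=u/(u+1)$ does not commute with scaling by a constant. Here I would isolate the two elementary inequalities $f(\lambda u)\mik \lambda f(u)$ for $\lambda\meg1$ and $f(\lambda u)\meg\lambda f(u)$ for $0<\lambda\mik1$, valid for all $u>0$ (each reduces to comparing the denominators $\lambda u+1$ and $u+1$). Applying the first with $\lambda=1+\eps$ to the upper bound $s\mik(1+\eps)t$ yields $p_{xy}=f(s)\mik f\big((1+\eps)t\big)\mik(1+\eps)f(t)=(1+\eps)\frac{a(x)}{a(x)+a(y)}$, and applying the second with $\lambda=(1+\eps)^{-1}$ to the lower bound $s\meg(1+\eps)^{-1}t$ gives the matching lower estimate. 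This establishes the two-sided bound for every pair $x,y$, which is precisely the assertion that $(K,w)$ is an $\eps$-approximate Bradley--Terry model.
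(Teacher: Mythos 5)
Your proposal is correct and follows essentially the same route as the paper: the same score $a(r)=1$, $a(y)=p_{yr}/p_{ry}$, the same reduction of the ratio $\frac{p_{xy}/p_{yx}}{a(x)/a(y)}$ to the balance ratio of the triangle $\{x,y,r\}$, and the same conversion from the ratio bound to the probability bound --- your inequalities $f(\lambda u)\mik\lambda f(u)$ for $\lambda\meg1$ and $f(\lambda u)\meg\lambda f(u)$ for $\lambda\mik1$ are exactly the paper's ``add $1$ and take reciprocals'' step in a cleaner packaging. No gaps; the degenerate case where one of $x,y$ equals $r$ and the orientation-invariance of $\eps$-balance are both handled.
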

\begin{proof}
  We define $a:V(K)\to\R_{>0}$ setting $a(r)=1$ and for every node $y$ in $K$ with $y\neq r$ we set $a(y)=\frac{p_{yr}}{p_{ry}}$. We claim that $a$ is an
  $\eps$-approximate Bradley--Terry score. Indeed, it follows directly from the definition of $a$ that for every node $y$ in $K$ with $y\neq r$, we have  \[p_{ry}=\frac{a(r)}{a(r)+a(y)}\;\text{and}\;p_{yr}=\frac{a(y)}{a(r)+a(y)}.\]
  Let $x,y$ be two distinct nodes in $K$ different from $r$. Since the triangle consisting of the nodes $x,y$ and $r$ is $\eps$-balanced, we obtain
  \begin{equation}
    \label{eq010}
    (1+\eps)^{-1}\mik \frac{p_{yx}}{p_{xy}}\cdot \frac{p_{ry}}{p_{yr}} \cdot \frac{p_{xr}}{p_{rx}}= \frac{p_{yx}}{p_{xy}} \cdot a(x)\cdot \frac{1}{a(y)}\mik 1+\eps
  \end{equation}
  and therefore
  \begin{equation}
    \label{eq011}
    (1+\eps)^{-1}\frac{a(y)}{a(x)}\mik \frac{p_{yx}}{p_{xy}}\mik (1+\eps)\frac{a(y)}{a(x)}.
  \end{equation}
  By \eqref{eq011}, we get that
    \begin{equation}
    \label{eq012}
    \begin{split}
      (1+\eps)^{-1}\Big(1+\frac{a(y)}{a(x)}\Big)&
      < 1 + (1+\eps)^{-1}\frac{a(y)}{a(x)}
    \mik 1 + \frac{p_{yx}}{p_{xy}}\\
    & <  1 + (1+\eps)\frac{a(y)}{a(x)}
    \mik (1+\eps)\Big(1+\frac{a(y)}{a(x)}\Big)
    \end{split}
  \end{equation}
  and therefore
  \begin{equation}
    \label{eq013}
    (1+\eps)^{-1} \frac{1}{1+\frac{a(y)}{a(x)}}<\frac{1}{1 + \frac{p_{yx}}{p_{xy}}}<(1+\eps)\frac{1}{1+\frac{a(y)}{a(x)}}.
  \end{equation}
  Finally, since
  \[\frac{a(x)}{a(x)+a(y)}=\frac{1}{1+\frac{a(y)}{a(x)}}\;\text{and}\;p_{xy}=\frac{1}{1 + \frac{p_{yx}}{p_{xy}}},\]
  by inequality \eqref{eq013}, the result follows.
\end{proof}

  Let $(K,w)$ be a stochastic tournament and $\eps$ a real with $0<\eps\mik1$. We will say that $(K,w)$ is $\eps$-approximately reversible, if there is a probability distribution $(\pi_x)_x$ such that for every pair of distinct nodes $x$ and $y$ in $K$ we have
  \[(1+\eps)^{-1}\mik\frac{\pi_x p_{xy}}{\pi_y p_{yx}}\mik1+\eps.\]

  \begin{proposition}
    \label{aprprop2}
    Let $(K,w)$ be a stochastic tournament and  $\eps$ a real with $0<\eps\mik1$. Assume that $(K,w)$ is an $\eps$-approximate Brandley--Terry model. Then
    $(K,w)$ is $3\eps$-approximately reversible.
  \end{proposition}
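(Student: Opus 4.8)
The plan is to reuse the stationary distribution that worked in the exact case. Recall that in the proof of \Prr{prop1} the stationary distribution was taken proportional to $1/a(x)$. Here I would set $\pi_x := (1/a(x))\big/\sum_{z\in V(K)}(1/a(z))$, which is a genuine probability distribution since each $a(z)>0$, and then verify the defining inequality of $3\eps$-approximate reversibility for this particular $\pi$. The normalising constant cancels in the ratio $\frac{\pi_x p_{xy}}{\pi_y p_{yx}}$, so that quantity simply equals $\frac{a(y)}{a(x)}\cdot\frac{p_{xy}}{p_{yx}}$, and the task reduces to controlling $\frac{p_{xy}}{p_{yx}}$ in terms of $\frac{a(x)}{a(y)}$.

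Next I would feed in the two instances of the $\eps$-approximate Bradley--Terry inequality, one for $p_{xy}$ and one for $p_{yx}$, noting that both share the common denominator $a(x)+a(y)$. Dividing the upper bound for $p_{xy}$ by the lower bound for $p_{yx}$ gives $\frac{p_{xy}}{p_{yx}}\mik (1+\eps)^2\frac{a(x)}{a(y)}$, and dividing the lower bound for $p_{xy}$ by the upper bound for $p_{yx}$ gives symmetrically $\frac{p_{xy}}{p_{yx}}\meg (1+\eps)^{-2}\frac{a(x)}{a(y)}$. Substituting these into the expression for the ratio yields $(1+\eps)^{-2}\mik \frac{\pi_x p_{xy}}{\pi_y p_{yx}}\mik (1+\eps)^2$ for every pair of distinct nodes $x,y$.

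Finally I would close the gap between the exponent $2$ and the target constant $3$ using the standing hypothesis $\eps\mik1$. Since $(1+\eps)^2 = 1+2\eps+\eps^2\mik 1+3\eps$ (because $\eps^2\mik\eps$ when $\eps\mik1$), and taking reciprocals $(1+\eps)^{-2}\meg(1+3\eps)^{-1}$, the interval $[(1+\eps)^{-2},(1+\eps)^2]$ is contained in $[(1+3\eps)^{-1},\,1+3\eps]$, which is exactly the bound required by the definition of $3\eps$-approximate reversibility.

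The argument is essentially mechanical, so there is no deep obstacle. The only place the assumption $\eps\mik1$ is needed is the elementary inequality $(1+\eps)^2\mik 1+3\eps$, which is precisely what accounts for the loss of the factor $3$ in the conclusion. The one point that genuinely needs care is keeping the directions of the inequalities straight when combining the two Bradley--Terry bounds (upper over lower in one case, lower over upper in the other), so I would double-check the two endpoints explicitly rather than the overall structure.
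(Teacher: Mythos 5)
Your proposal is correct and follows essentially the same route as the paper: take $\pi_x$ proportional to $1/a(x)$, observe that the normalisation cancels in the ratio $\frac{\pi_x p_{xy}}{\pi_y p_{yx}}$, combine the upper bound for $p_{xy}$ with the lower bound for $p_{yx}$ (and vice versa) to get the two-sided bound $(1+\eps)^{\pm 2}$, and finish with $(1+\eps)^2\mik 1+3\eps$ for $\eps\mik 1$. The paper's proof is exactly this computation, with the normalisation handled by a ``without loss of generality'' rescaling of $a$ rather than an explicit normalising constant.
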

  \begin{proof}
    Let $a$ be an $\eps$-approximate Brandley--Terry score. Without loss of generality we may assume that $\sum_x1/a(x)=1$, where the sum is taken over all nodes of $K$. For every node $x$ of $K$ we set $\pi_x=1/a(x)$. We claim that $(\pi_x)_x$ witnesses that $(K,w)$ is $\eps$-approximately reversible. Indeed, let $x$ and $y$ be to distinct nodes of $K$. First observe that since $a$ is an $\eps$-approximate Brandley--Terry score, we have that
    \begin{equation}
      \label{eq014}
      (1+\eps)^{-1}\frac{a(x)}{a(x)+a(y)}\mik p_{xy} \mik (1+\eps)\frac{a(x)}{a(x)+a(y)}
    \end{equation}
    and
    \begin{equation}
      \label{eq015}
      (1+\eps)^{-1}\frac{a(y)}{a(x)+a(y)}\mik p_{yx} \mik (1+\eps)\frac{a(y)}{a(x)+a(y)}.
    \end{equation}
    Thus, we have that
    \begin{equation}
      \label{eq0016}
      \frac{\pi_xp_{xy}}{\pi_yp_{yx}}=\frac{a(x)^{-1}p_{xy}}{a(y)^{-1}p_{yx}}\mik\frac{1+\eps}{(1+\eps)^{-1}}\mik 1 + 3\eps.
    \end{equation}
    Similarly it follows that $\frac{\pi_xp_{xy}}{\pi_yp_{yx}}\meg(1+3\eps)^{-1}$.
  \end{proof}

  Finally, it is immediate that if a stochastic tournament $(K,w)$ is a $\eps$-approximately reversible then every triangle is $7\eps$-balanced.

\section{Balanced cycles}

In this section we show that if all cycles in any basis of the cycle space of our stochastic tournament $(K,w)$ are balanced, then every cycle of $K$ is balanced.

\begin{lemma} \label{ST}
	Let $K$ be a tournament and $S$ an (undirected) spanning tree of $K$. Then every weighting $\hat{w}:E(S)\to \R_{>0}$ can be extended to a weighting $w:E(K)\to \R_{>0}$ of $K$ such that the Markov chain defined by $Q(K,w)$ is reversible.
\end{lemma}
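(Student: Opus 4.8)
The plan is to exploit the fact that a spanning tree carries no cycles, so the prescribed weights on its edges can be matched exactly by a Bradley--Terry scoring without generating any consistency constraint; reversibility then comes for free from \Prr{prop1}. Concretely, for each edge of $S$ with endpoints $u,v$ I read its orientation in $K$, say $u\to v$, together with the prescribed probability $p_{uv}=\hat w(uv)$, and I aim to produce positive scores $a:V(K)\to\R_{>0}$ realising $p_{uv}/p_{vu}=a(u)/a(v)$ on that edge.

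First I would root $S$ at an arbitrary vertex $r$ and set $a(r)=1$. Traversing $S$ outward from $r$, I define $a(v)$ for every vertex $v$ by solving the single equation forced on the tree edge joining $v$ to its parent $u$: if that edge is oriented $u\to v$ in $K$ then I take $a(v)=a(u)\,\bigl(1-\hat w(uv)\bigr)/\hat w(uv)$, and symmetrically if it is oriented $v\to u$. Because $S$ is a tree, each vertex other than $r$ has a unique parent and is assigned a value exactly once, so $a$ is well defined, and since every factor involved is a ratio of probabilities it is strictly positive. Next I would extend $w$ to all of $E(K)$ by the Bradley--Terry formula, setting $w(xy)=a(x)/(a(x)+a(y))$ for every directed edge $xy\in E(K)$. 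A one-line computation shows that on a tree edge oriented $u\to v$ this returns $a(u)/(a(u)+a(v))=\hat w(uv)$, so $w$ genuinely extends $\hat w$, and it takes values in $\R_{>0}$.

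By construction $(K,w)$ satisfies the Bradley--Terry condition with score $a$, so the implication \ref{BT}$\Rightarrow$\ref{rev} of \Prr{prop1} yields at once that the Markov chain $Q(K,w)$ is reversible, as required.

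The only point that needs care is the well-definedness of the propagation in the middle step, and this is exactly where the hypothesis that $S$ is a \emph{spanning tree} is used: connectedness guarantees the recursion from $r$ reaches every vertex, spanning guarantees every vertex thereby receives a score, and acyclicity guarantees that no vertex is forced to take two incompatible values around a cycle. I do not expect any genuine obstacle beyond bookkeeping the edge orientations. Alternatively one could bypass the scores and instead choose the weights of the non-tree edges so as to balance the fundamental cycles they determine, and then invoke the fact that these cycles form a basis of the cycle space to conclude that every cycle is balanced (hence Kolmogorov's criterion holds); but routing through \Prr{prop1} is shorter and keeps the proof self-contained.
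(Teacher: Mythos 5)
Your proposal is correct and is essentially the paper's own argument: the paper propagates a stationary measure $\pi$ along the tree from a root $r$ (with $\pi(y)=\pi(x)\hat w(xy)/(1-\hat w(xy))$) and then sets the chord weights to satisfy detailed balance, which is exactly your construction after the substitution $\pi=1/a$ from the proof of \Prr{prop1}. Even your closing alternative (balancing fundamental cycles and invoking the cycle-space basis) is the same remark the paper makes after \Lr{ccc}.
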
	

As we observe below, \Lr{ST} can be thought of as a special case of a more general result \Lr{ccc}.

\begin{proof}[Proof of \Lr{ST}]
	We will define the extension $w$ of $\hat{w}$ by first defining the stationary measure $\pi$ of the Markov chain of $Q(K,w)$.
	
	For this, fix a vertex $r$ of $K$, and let $\pi(r)=1$ (any positive value would do). For each neighbour $y$ of $r$ in $S$, we let $\pi(y)= \pi(r)\hat{w}(ry)/(1-\hat{w}(ry))$, where we set $\hat{w}(ry) = 1- \hat{w}(yr)$ if the $ry$--edge is directed from $y$ to $r$.
	
	We proceed recursively to assign a value $\pi(x)$ to each neighbour $x$ of $y$ except $r$, by the same formula:  $\pi(x)= \pi(y)\hat{w}(yx)/(1-\hat{w}(yx))$. This defines $\pi$. Now for every chord $e=xy$ of $S$, we let $w(xy)$ be the unique solution to $\frac{w(xy)}{1-w(xy)} = \frac{\pi(y)}{\pi(x)}$, that is,  $w(xy)= c/(1+c)$ where $c= \frac{\pi(y)}{\pi(x)}$.
	It follows that the measure $\pi$ is stationary for $Q(K,w)$, since we have $\pi(x) q_{xy} = \pi(x) \frac{w(xy)}{n} = \pi(y) \frac{w(yx)}{n} = \pi(y) q_{yx}$ by the definitions.
	\comment{

		Let $cd \in E(K)\sm E(S)$ be a chord of $S$, and let $w(cd)$ be the unique value making the fundamental cycle $C_{cd}$ of $cd$ balanced. This uniquely determines the extension $w$ of $\hat{w}$.
		
		We claim that every cycle of $K$ is balanced. This will follow from the well-known fact that the fundamental cycles of any spanning tree generate the cycle space of the graph \cite{diestelBook05}, combined with the following claim
		
		{\bf Claim:} If $D,F$ are two balanced cycles of $K$, then any cycle $C$ contained in their symmetric difference $D \sydi F$ is balanced.
		
	}
\end{proof}

Call a cycle $C$ of $K$ \defi{balanced}, if $\lam(\dir{C}):= \Pi_{xy\in \dir{C}} \frac{p_{xy}}{p_{yx}} = 1$, where $\dir{C}$ denotes any of the two possible cyclic orientations of $C$.

\begin{lemma} \label{ccc}
	Let $(K,w)$ be a stochastic tournament. Let $B$ be a basis of the cycle space $\ccc_\Z$ of $K$. If every element of $B$ is balanced, then every element of $\ccc$ is balanced (and hence the Markov chain defined by $Q(K,w)$ is reversible).
\end{lemma}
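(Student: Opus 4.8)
The plan is to recognise $\lam$ as a group homomorphism from the integer cycle space $\ccc_\Z$ to the multiplicative group $(\R_{>0},\cdot)$, and then invoke the elementary fact that a homomorphism vanishing on a generating set vanishes everywhere. First I would fix, on the underlying undirected complete graph of $K$, a reference orientation $\are$ of each edge, so that an element $z\in\ccc_\Z$ is a $\Z$-valued circulation $z:\arE\to\Z$. For a reference-oriented edge $\are=(u,v)$ I set $\mu(\are):=p_{uv}/p_{vu}\in\R_{>0}$; note that $p_{uv}$ and $p_{vu}=1-p_{uv}$ are defined for every ordered pair irrespective of the direction given to the edge in the tournament $K$, and that reversing an edge inverts $\mu$. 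I would then extend $\lam$ to all of $\ccc_\Z$ by $\lam(z):=\prod_{\are}\mu(\are)^{z(\are)}$. For the circulation $z_C$ carried by a single oriented cycle $\dir{C}$ (coefficient $+1$ on edges agreeing with $\dir{C}$, $-1$ on those disagreeing, $0$ elsewhere) this recovers the original value $\lam(\dir{C})$, so a cycle $C$ is balanced exactly when $\lam(z_C)=1$.

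The key observation is that $\lam:\ccc_\Z\to(\R_{>0},\cdot)$ is a group homomorphism: the exponents add when circulations are added, so $\lam(z+z')=\lam(z)\,\lam(z')$. Equivalently, $\log\lam$ is the linear functional $z\mapsto\sum_{\are}z(\are)\log\mu(\are)$ on $\ccc_\Z$. By hypothesis every element of the basis $B$ is balanced, i.e.\ lies in $\ker\lam$. Since $B$ generates $\ccc_\Z$ and $\ker\lam$ is a subgroup, it follows that $\ker\lam=\ccc_\Z$, so $\lam\equiv1$ on the whole cycle space. As every cycle $C$ of $K$ contributes an element $z_C\in\ccc_\Z$, we get $\lam(\dir{C})=\lam(z_C)=1$; that is, every cycle of $K$ is balanced.

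Finally I would deduce reversibility from Kolmogorov's criterion, which the paper has already flagged as the content of \ref{rev}$\leftrightarrow$\ref{tri}: a finite Markov chain is reversible \iff\ for every cycle $v_0v_1\cdots v_k=v_0$ the product of forward transition probabilities equals that of the backward ones. Here $q_{xy}=p_{xy}/n$, so around a cycle of length $k$ the factors $n^{-k}$ cancel and Kolmogorov's condition reduces precisely to $\prod p_{v_iv_{i+1}}=\prod p_{v_{i+1}v_i}$, i.e.\ to $\lam(\dir{C})=1$. Thus all cycles being balanced is equivalent to reversibility of $Q(K,w)$, which closes the proof.

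The only point requiring care — and the main, though rather mild, obstacle — is verifying that $\lam$ is genuinely well defined and multiplicative on the integer cycle space. This hinges on the antisymmetry $\mu(\are)\mapsto\mu(\are)^{-1}$ under edge reversal and on working over $\Z$ rather than $\ztwo$: an edge traversed in opposite directions by two summands then cancels, whereas over $\ztwo$ such an edge would be deleted by symmetric difference while its two contributions reinforce, and the multiplicative structure would be lost. Working in $\ccc_\Z$ is exactly what makes $\lam$ a bona fide homomorphism and lets the generating-set argument go through.
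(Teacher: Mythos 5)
Your proposal is correct and follows essentially the same route as the paper: the paper's proof also rests on the multiplicativity $\lam(\dir{C}+\dir{D})=\lam(\dir{C})\lam(\dir{D})$ over $\ccc_\Z$, concluding that anything generated by a balanced set is balanced, and then invokes Proposition~\ref{prop1} (whose (ii)$\leftrightarrow$(iii) equivalence is Kolmogorov's criterion) for reversibility. You merely make explicit the well-definedness of $\lam$ as a homomorphism via a reference orientation, which the paper leaves as ``straightforward to check.''
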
	
\begin{proof}
	It is straightforward to check that if $\dir{C},\dir{D}\in \ccc_\Z$ then $\lam(\dir{C}+\dir{D}) = \lam(\dir{C}) \lam(\dir{D})$ by the definition of $\lam$. Thus any element of $\ccc_\Z$ generated by a balanced set is balanced, and the result follows.
		By \Prr{prop1}, $Q(K,w)$ is reversible in this case.
\end{proof}

We could have deduced \Lr{ST} from the last result as follows. For every chord $e=xy$ of $T$, we can assign a value $w(xy)$ such that the fundamental cycle of $e$ with respect to $T$ becomes balanced. The result then follows from the well-known fact that the fundamental cycles of any spanning tree generate the cycle space \cite{diestelBook05}.

\section{Open Problems}

Suppose that instead of a tournament $K$ we have an arbitrary directed graph $G$, and an assignment of weights $w: E(G) \to (0,1)$ to the edges of $G$. Then the Markov chain corresponding to $Q$ is still well-defined if we set $p_{xy}=0$ whenever $x,y$ do not form an edge of $G$. Thus we can ask if the Markov chain is reversible. Likewise, we can generalise the Bradley--Terry condition by demanding \eqref{pa} only when $x,y$ form an edge. We expect that using a version of the Szemeredi regularity lemma it is possible to show that these properties are \ltest\ for arbitrary $G$ following the approach of \cite{AFNS} or \cite{AlShaMon}, which characterises the (unweighted) graph properties that are testable. However, this would give far worse bounds than those we obtained for tournaments, which naturally leads to the following question.

\begin{problem}
	Are reversibility and the Bradley--Terry condition easily \ltest\ for arbitrary directed graphs?
\end{problem}

We remark that the problem of characterising easily testable graph properties is open \cite{AloFoxEas}.

The notion of testability has been adapted to sparse graphs \cite{GolRonPro}; the definition is the same, except that we say that \g is \defi{$\epsilon$-far} from having a property $\cp$ if one must
add or delete at least $\epsilon n$ (rather than $\epsilon n^2$) edges in order to turn \G\ into a graph having $\cp$.

\begin{problem}
	Are reversibility and the Bradley--Terry condition \ltest\ in the sparse sense?
\end{problem}

\medskip

Next, we propose a generalisation of the Bradley--Terry condition. Although it also makes sense for arbitrary graphs, we will formulate it for tournaments for simplicity.

Let us say that a stochastic tournament $(K,w)$ has \defi{Bradley--Terry dimension} (or \defi{BT--dimension} for short) $d$, if there is a family $(a_i)_{1\leq i \leq d}$ of $d$ functions $a_i : V(K) \to \R_{>0}$ such that, \fe\ $xy\in V(K)$, we have \begin{equation}
	\label{pad}
p_{xy} = \frac1{d}\sum_{1\leq i \leq d} \frac{a_i(x)}{a_i(x)+a_i(y)},
\end{equation}
and this is not true for any family of $d-1$ functions. Here $p_{xy}$ is determined by $w$  as explained in \Sr{secStTo}.
Thus $(K,w)$ has BT--dimension $1$ \iff\ it satisfies the Bradley--Terry condition.

Intuitively, if $(K,w)$ has BT--dimension $d$, then we can represent the probabilities $p_{xy}$ via a set of $d$ games, such that each player $x\in V(K)$ has a strength $a_i(x)$ in game $i$, and when players $x,y$ compete, one of these $d$ games is chosen uniformly at random, and $x,y$ play a match of that game.

\begin{problem}
	Is the property of having BT--dimension $d$ \ltest\ for $d>1$?
\end{problem}

One can think of BT--dimension as a generalisation of reversibility for Markov chains due to \Prr{prop1}. It would be interesting to extend properties known for reversible Markov chains to Markov chains with bounded BT--dimension.

\section*{Acknowledgement}
We would like to thank Oded Goldreich for several suggestions.

\bibliographystyle{plain}
\bibliography{collective}

\end{document}